\newtheorem{theorem}{Theorem}[section]
\newtheorem{lemma}[theorem]{Lemma}
\newtheorem{proposition}[theorem]{Proposition}
\theoremstyle{definition}
\newtheorem{definition}[theorem]{Definition}
\newtheorem{example}[theorem]{Example}
\newtheorem{remark}[theorem]{Remark}
\numberwithin{equation}{section}
\begin{document}
\setcounter{page}{1}
\title{\vspace{-1.5cm}
{\large\textit {}
\\[+.1cm]{\large{\bf A representation type of compression space of rank 2 } }}}
\date{}
\author{{\large \vspace{-2mm}  }Hossein Kheiri}
\affil{\large{\vspace{-3mm}  }h.kheiri@math.iut.ac.ir}

\maketitle
\vspace{-05mm}
\begin{abstract}
\normalsize 
In this paper, we proved that a compression space of rank $2$ is equivalent to an irreducible representation over a Lie algebra.
\end{abstract}
\noindent \textbf{keywords}: Linear space, Compression space, Line bundles, Irreducible representations.

\noindent\textbf{Mathematics Subject Classification (2010):} Primary 14J60; Secondary 15A30.

\baselineskip=.8cm
\maketitle

\section{Introduction}
Let $V$ and $W$ be two complex vector spaces, and consider a vector subspace $$M \subset Hom(V,W) \simeq V^{\vee} \otimes W.$$ $M$ is a space of matrices of constant rank $k$ if all its non-zero elements have the rank $=$ $k$.
In this paper, we apply the definition of compression space that was given in \cite{EH}. $M$ is called a compression space if there exist subspaces $V^\prime \subset V$ of codimension $k_{1}$ and $W^\prime \subset W$ of dimension $k_{2}$
such that
\begin{enumerate}
  \item $k = k_{1}+k_{2}$, and
  \item every element of $M$ maps $V^\prime$ into $W^\prime$.
\end{enumerate}
By the above assumption, if the rank of $M$ is one, hence $M$ is a compression space. But, it does not necessarily hold for every space of higher rank.
For example, the space of $3 \times 3$ skew-symmetric matrices has rank $2$, however it is not a compression space.
A compression space of rank $k$ is equivalent to a
space of dim$V$ $\times$ dim$W$ matrices having a common $u_{1}$ $\times$ $w_{1}$ block of
zeros with
\begin{center}
  dim $V$ - $u_{1}$ + dim $W$ - $w_{1}$ = $k$,
\end{center}
the largest possible value. For example, if
codim $V^\prime$ = dim $W^\prime$ = 1, then a compression space of rank $2$ is equivalent to the space of matrices of the form

\[
\begin{pmatrix}
0 & 0 & \cdots & 0 & * \\
0 & 0 & \cdots & \cdots & * \\
\vdots  & \vdots  & \ddots & \vdots  \\
0 & 0 & \cdots & 0 & * \\
* & * & \cdots & * & *
\end{pmatrix}.
\]
This note has expanded from our attempt \cite{KH} to construct a general theory of representations over the space of global sections of an algebraic vector bundles and can be regarded as a first step towards its systematic exposition. We decided to present it as a separate work. So we want to devote a question of linear algebra motivated by the study of vector bundles over the projective space. More precisely, the aim of this note is a classification of a compression space of rank $2$ by using representations over the space of global sections of a vector bundle that we will show in section \ref{a}. However, more research is needed before being able to associate between compression spaces of higher rank and
representations over Lie algebras.
\section{Representation and Vector Bundle}
According to Algebraic Geometry written by Hartshorne \cite{R.H}, any geometrical vector bundle $E$ of rank $m$ over $X$ is correspondence to a locally free sheaf of rank $m$ over $X$.
If vector bundle $E$ is trivial, then the
corresponding locally free sheaf is isomorphic to $m$-sums ${O_{X} \bigoplus \cdots \bigoplus O_{X}}$ where $O_{X}$ is the structure sheaf over $X$. Conversely is true.
Therefore, if there is no problem, according to the above hypotheses, consider these two concepts instead of each other.

Through of this paper, we work over complex vector bundles of finite rank $m$ and $\Gamma(X,E)$ is the space of global sections of a vector bundle $E$ over $X$ where $X$ is a finite dimensional projective space over $\mathbb{C}.$ Any unexplained terminology and all the basic result on representation theory that are used in the sequel can be find in \cite{W.F}.
\begin{definition}
\label{7}
A basis of sections for a vector bundle $E$ over $X$ is a collection of sections $e_{1},\cdots , e_{n}$ such that the vectors $e_{1}(x),\cdots,e_{n}(x)$ are linearly independent in each fiber $E_{x}$ for $x \in X.$ The generated vector space by a basis of sections is denoted by $V_{E}.$
\end{definition}
\begin{definition}
\label{8}
Let $E$ be a vector bundle over $X$ and a linear subspace $\{0\} \neq Z \subset \Gamma(X,E)$ where $\Gamma(X,E)$ is the space of global sections of $E$ over $X$. Define the map
\begin{center}
 $ \mu_{Z} : X \times Z \to E$
\end{center}
given by $\mu_{Z} (x,s) = s(x)$ for every $(x,s) \in X \times Z.$ We said $E$ is generated by the sections of $Z$ if $\mu_{Z}$ is surjective.\\
\end{definition}
\begin{example}
Let $g$ be a generated vector space by three sections $e_{1}, e_{2}$ and $e_{3}$.
Consider the flowing Lie bracket
\begin{center}
  $[e_{3}, e_{1}]:= e_{1} , [e_{3}, e_{2} ] := e_{2} , [e_{1} , e_{2}] := 0$
\end{center}
over $g$. Then $g$ is a solvable Lie Algebra. For more about construction of three dimensional solvable Lie algebra see \cite{W.A}.
\end{example}
\begin{example}
Let $T_{n}(k)$ be the Lie algebra of all $n \times n$ upper triangular matrices over a field $k$  with the Lie product $[ A,B] := AB-BA$. $T_{n}(k)$ is a solvable Lie algebra. Let $O(2)$ be a line bundle over $\mathbb{P} ^{1}$. dim $\Gamma (\mathbb{P} ^{1}, O(2)) = $ dim $T_{2}(\mathbb{C})=3.$ Two finite dimensional vector spaces are isomorphic if and only if they have the same dimension. So there is a bijective map of vector spaces
\begin{center}
  $\varphi : \Gamma (\mathbb{P} ^{1}, O(2)) \to T_{2}(\mathbb{C})$
\end{center}
now define the Lie bracket
\begin{center}
  $[ x , y ] : = [ \varphi (x) , \varphi (y) ] = \varphi (x) \varphi (y) - \varphi (y) \varphi (x)$
\end{center}
over $\Gamma (\mathbb{P} ^{1}, O(2))$. Since $T_{n}(\mathbb{C})$ is a solvable Lie algebra then we can consider $\Gamma (\mathbb{P} ^{1}, O(2))$ as a solvable Lie algebra. Hence any generated vector space by a basis of sections of $ O(2)$ can be consider as a solvable Lie algebra.
\end{example}
\begin{remark}
  From now on, Under the assumption of Definitions \ref{7} and \ref{8}, we will assume $ V_{E}$ is a solvable Lie algebra.
\end{remark}
\begin{remark}
By corresponding between vector bundles and locally free sheaves, a vector bundle $E$ of finite rank $m$ is called trivial if the corresponding locally sheaf of $E$ is isomorphic to the $m$-sum ${O_{X} \bigoplus \cdots \bigoplus O_{X}}$.
\end{remark}
\begin{proposition}
(Ado theorem, [9, Theorem E.4]) Every Lie algebra has a faithful finite dimensional representation.
\end{proposition}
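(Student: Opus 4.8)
The statement is Ado's theorem, and I would prove it by reducing, via the structure theory of Lie algebras over $\mathbb{C}$, to two manageable cases and then gluing them together. Write $\mathfrak{g}$ for the finite-dimensional Lie algebra, $\mathfrak{r}$ for its solvable radical, and $\mathfrak{n}$ for its nilradical (the largest nilpotent ideal), so that $[\mathfrak{g},\mathfrak{r}]\subseteq\mathfrak{n}$ and $\mathfrak{g}/\mathfrak{n}$ has strictly smaller dimension whenever $\mathfrak{n}\neq 0$. The plan is to induct on $\dim\mathfrak{g}$: if $\mathfrak{n}=0$ then $[\mathfrak{r},\mathfrak{r}]$ is a nilpotent ideal and so vanishes, making $\mathfrak{r}$ an abelian — hence nilpotent — ideal, which forces $\mathfrak{r}=0$; thus $\mathfrak{g}$ is semisimple and its adjoint representation $\mathrm{ad}\colon\mathfrak{g}\to\mathfrak{gl}(\mathfrak{g})$ is already faithful because $Z(\mathfrak{g})=0$, giving the base case. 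If $\mathfrak{n}\neq 0$, I would build a faithful representation of $\mathfrak{g}$ out of a faithful representation of the ideal $\mathfrak{n}$ together with a faithful representation of the quotient $\mathfrak{g}/\mathfrak{n}$ supplied by the inductive hypothesis.

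First I would treat the nilpotent ideal $\mathfrak{n}$ directly. Let $U(\mathfrak{n})$ be its universal enveloping algebra, $\varepsilon\colon U(\mathfrak{n})\to\mathbb{C}$ the counit, and $\omega=\ker\varepsilon$ the augmentation ideal. Since $\mathfrak{n}$ generates $U(\mathfrak{n})$, the associated graded ring $\mathrm{gr}_\omega U(\mathfrak{n})$ is generated in degree one by the finite-dimensional space $\omega/\omega^2\cong\mathfrak{n}/[\mathfrak{n},\mathfrak{n}]$, so every quotient $M:=U(\mathfrak{n})/\omega^{c+1}$, with $c$ the nilpotency class of $\mathfrak{n}$, is finite-dimensional. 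The Lie algebra $\mathfrak{n}$ acts on $M$ by left multiplication; because $x\cdot\omega^{k}\subseteq\omega^{k+1}$ for $x\in\mathfrak{n}$, every element of $\mathfrak{n}$ acts by a nilpotent operator, and because the lower central series satisfies $\mathfrak{n}\cap\omega^{\,k}=\mathfrak{n}_{k}$, one has $\mathfrak{n}\cap\omega^{c+1}=\mathfrak{n}_{c+1}=0$, so $x\mapsto(u\mapsto xu)$ is injective on $\mathfrak{n}$. This yields a faithful finite-dimensional representation $\rho_0\colon\mathfrak{n}\to\mathfrak{gl}(M)$ in which $\mathfrak{n}$ acts nilpotently.

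The inductive step requires two more ingredients. By the inductive hypothesis $\mathfrak{g}/\mathfrak{n}$ has a faithful finite-dimensional representation; pulling it back along the projection $\pi\colon\mathfrak{g}\to\mathfrak{g}/\mathfrak{n}$ gives $\lambda\colon\mathfrak{g}\to\mathfrak{gl}(L)$ with $\ker\lambda=\mathfrak{n}$. Suppose I also have an extension of $\rho_0$ to the whole of $\mathfrak{g}$, that is, a finite-dimensional representation $\tau\colon\mathfrak{g}\to\mathfrak{gl}(N)$ whose restriction to $\mathfrak{n}$ is still faithful. Then $\tau\oplus\lambda$ has kernel $\ker(\tau|_\mathfrak{n})\cap\mathfrak{n}=0$, so it is a faithful finite-dimensional representation of $\mathfrak{g}$, completing the induction.

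I expect the construction of $\tau$ to be the main obstacle. The idea is that $\mathfrak{g}$ acts on $\mathfrak{n}$ by derivations through $\mathrm{ad}$, these derivations lift to derivations of $U(\mathfrak{n})$ preserving $\omega$ and hence act on $M=U(\mathfrak{n})/\omega^{c+1}$; the task is to assemble from the $\mathfrak{n}$-action and these derivations an honest $\mathfrak{g}$-module on a finite-dimensional space, and here one must show that the cohomological obstruction to extending the module structure from the ideal $\mathfrak{n}$ to $\mathfrak{g}$ vanishes. This is exactly the delicate part of Ado's theorem; it relies essentially on characteristic zero, through the Levi decomposition, Weyl's complete reducibility, and the nilpotence of the operators $\rho_0(x)$, and it constitutes the step on which I would expect to spend the most effort.
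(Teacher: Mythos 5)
First, a point of comparison: the paper itself does not prove this proposition at all. It is quoted as a known result, with the citation [9, Theorem E.4] (Fulton--Harris, Appendix E), and is invoked later purely as a black box in the proof of Lemma \ref{1}. So there is no paper proof for your argument to agree or disagree with; your proposal has to stand on its own as a proof of Ado's theorem.

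Judged on its own, it has a genuine gap, and you name it yourself: the construction of $\tau$. Everything you actually carry out is correct but is the routine part of the theorem --- the reduction to the semisimple base case via the adjoint representation (faithful since $Z(\mathfrak{g})=0$), the faithful nilpotent representation of the nilradical $\mathfrak{n}$ on $M=U(\mathfrak{n})/\omega^{c+1}$, and the direct-sum observation that $\tau\oplus\lambda$ is faithful once $\tau|_{\mathfrak{n}}$ is. The entire content of Ado's theorem is concentrated in the step you defer: producing a representation of all of $\mathfrak{g}$ that remains faithful on the ideal $\mathfrak{n}$. The derivation-lifting idea you sketch does not suffice by itself. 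Derivations of $\mathfrak{n}$ do extend to $U(\mathfrak{n})$ and preserve each $\omega^k$, hence act on $M$; if $\mathfrak{g}$ were a semidirect product $\mathfrak{q}\ltimes\mathfrak{n}$, setting $\tau(h+q)=(\text{left multiplication by }h)+D_q$ would finish the proof. But the extension $0\to\mathfrak{n}\to\mathfrak{g}\to\mathfrak{g}/\mathfrak{n}\to 0$ does not split in general, so the classical argument instead climbs a chain of subalgebras $\mathfrak{n}\subset\cdots\subset\mathfrak{r}\subset\mathfrak{g}$, with codimension-one steps inside the radical handled by a Zassenhaus-type lemma on extending nilrepresentations (re-doing the enveloping-algebra quotient at each stage, with nilpotency bookkeeping), and with the Levi decomposition and Weyl's complete reducibility only at the final step $\mathfrak{r}\subset\mathfrak{g}$. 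None of this is in the proposal; saying that ``the cohomological obstruction vanishes'' is a restatement of what must be proved, not a proof. A secondary, smaller gap: you assert $\mathfrak{n}\cap\omega^{k}=\mathfrak{n}_{k}$ without justification; only $\mathfrak{n}\cap\omega^{c+1}=0$ is needed, but even that requires a PBW/weight argument adapted to the lower central series. As it stands, your text is a correct roadmap of the standard proof, with the one genuinely hard step left unproved.
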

\begin{lemma}
\label{1}
Let $E$ be a vector bundle over $X$ and $E$ be generated by the sections of $V_{E}$ where $V_{E}$ is a solvable Lie algebra. Then $E$ is isomorphic to $O_{X}$ if and only if there exists a pair $(V_{E},\pi)$ where $\pi$ is an irreducible representation over $V_{E}.$
\end{lemma}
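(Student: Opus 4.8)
The plan is to reduce the biconditional to one fact about solvable Lie algebras over $\mathbb{C}$ --- Lie's theorem --- and then translate the resulting numerical condition into the geometry of line bundles. Throughout I read ``$\pi$ is an irreducible representation over $V_E$'' as: $\pi$ is a representation of the (solvable) Lie algebra $V_E$ whose underlying representation space is $V_E$ itself. Recall that by Lie's theorem (see \cite{W.F}) every finite dimensional representation of a solvable Lie algebra over $\mathbb{C}$ stabilises a complete flag of invariant subspaces, so that its only irreducible representations are one dimensional. Since $V_E$ is assumed solvable and all spaces in sight are finite dimensional, the existence of an irreducible $\pi$ with space $V_E$ will be equivalent to the single numerical condition $\dim V_E = 1$; the proof then amounts to showing $\dim V_E = 1 \iff E \cong O_X$.

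For the direction $E \cong O_X \Rightarrow (V_E,\pi)$ exists, I would begin from the fact that $O_X$ is the trivial line bundle: it has rank $1$ and its constant section is nowhere vanishing, hence linearly independent in every fibre, so it is a basis of sections in the sense of Definition \ref{7}. Because a basis of sections is automatically linearly independent as a set of global sections, the number of its elements equals $\dim V_E$; for a rank one bundle this number is $1$, so $\dim V_E = 1$. On a one dimensional space there is no proper nonzero invariant subspace, so any representation of $V_E$ on itself --- for instance the adjoint representation --- is irreducible, which supplies the required pair $(V_E,\pi)$.

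For the converse I would argue through dimensions. Given an irreducible $\pi$ with representation space $V_E$, solvability of $V_E$ together with Lie's theorem forces $\dim V_E = 1$. Hence a basis of sections for $E$ reduces to a single section $e_1$ which, being linearly independent in each fibre, is nowhere vanishing. The hypothesis that $E$ is generated by the sections of $V_E$ means the evaluation map $\mu_{V_E}\colon X \times V_E \to E$ of Definition \ref{8} is surjective; since $V_E = \langle e_1\rangle$, each fibre $E_x$ is spanned by the single vector $e_1(x)$, so $E$ is a line bundle and $e_1$ is a global trivialising frame. As a line bundle with a nowhere vanishing global section is trivial, we conclude $E \cong O_X$.

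The step I expect to be the crux is the representation theoretic reduction, for this is exactly where the standing hypothesis ``$V_E$ is solvable'' is indispensable: without it one could have irreducible representations of dimension greater than one and the equivalence with $\dim V_E = 1$ would fail. The example $\Gamma(\mathbb{P}^1, O(2)) \cong T_2(\mathbb{C})$, of dimension $3$ and therefore admitting no irreducible representation on itself, is a useful consistency check, since it matches the fact that $O(2) \not\cong O_{\mathbb{P}^1}$. By comparison the geometric half is routine, resting only on the standard triviality criterion for a line bundle possessing a nowhere vanishing global section.
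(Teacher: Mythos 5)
Your argument is internally correct, but it proves a different reading of the lemma than the paper's own proof, and that difference is exactly where all the content lies. Both you and the paper share the same skeleton: reduce the biconditional to the numerical statement $\dim V_{E}=1$ via Lie's theorem (every irreducible finite dimensional representation of a complex solvable Lie algebra is one dimensional), and then pass between $\dim V_{E}=1$ and triviality of $E$ by the frame criterion, i.e.\ a line bundle with a nowhere vanishing generating section is isomorphic to $O_{X}$; your geometric halves and the paper's are essentially identical. The divergence is in what ``irreducible representation over $V_{E}$'' means. You fix the representation space to be $V_{E}$ itself, so Lie's theorem applies directly to $V_{E}$ and the backward direction is immediate. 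The paper instead takes $\pi : V_{E} \to gl(V)$ with $V$ an \emph{arbitrary} finite dimensional complex vector space; then irreducibility only bounds $\dim V$, not $\dim V_{E}$, so the paper must additionally force $\pi$ to be injective. It attempts this by decomposing $\pi$ into irreducible summands (not legitimate in general, since complete reducibility fails for solvable Lie algebras) and by invoking Ado's theorem to ``consider $\pi$ as injective'' --- a non sequitur, since Ado's theorem produces \emph{some} faithful representation, not faithfulness of the given irreducible one; in fact a solvable algebra of dimension at least $2$ admits no representation that is both irreducible and faithful, precisely because irreducible forces $\dim V = 1$ and hence $gl(V)\simeq\mathbb{C}$. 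Your reading is, moreover, the only one under which the stated equivalence can hold: under the paper's reading, the zero representation on $\mathbb{C}$ is an irreducible representation of \emph{any} $V_{E}$, so the right-hand side of the equivalence is always satisfied, and the backward implication would fail for $E = O_{X}\oplus O_{X}$ with a two dimensional solvable basis space $V_{E}$. So what your route buys is rigor: it eliminates the paper's flawed Ado step entirely, at the cost of committing to a specific (and, as the counterexample shows, necessary) interpretation of the pair $(V_{E},\pi)$; what the paper's route attempts to buy --- an equivalence quantified over all representation spaces $V$ --- cannot actually be had.
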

\begin{proof}
($\Longrightarrow$) Suppose that $E$ is a trivial line bundle. So there is a basis space $V_{E}$ for the vector bundle such that
dim $V_{E} =1.$  Therefore, there is an injective irreducible representation over $V_{E}$.

($\Longleftarrow$) Conversely, Consider that a vector bundle of rank $ m $, $E$ is isomorphic to the trivial bundle if and only if
it has $m$ sections $s_{1}, \cdots ,s_{m}$ such that the vectors $s_{1} (x),\cdots,s_{m} (x)$ are linearly independent in each fiber $E_{x}$ \footnote{For more details we refer the reader to \cite{HA}}.
If $E$ is a nontrivial line bundle, then
\begin{center}
  dim $V_{E} \neq 1$
\end{center}
for any basis space $V_{E}$ of $E$. By assumption, there exists an irreducible representation $$ \pi : V_{E} \to gl(V)$$ over $V_{E}$ where $ V $ is a finite dimensional complex vector space. Thus, there are irreducible representations $ \pi _{j} $ such that
\begin{center}
 $\pi = \bigoplus \pi _{j}$
\end{center}
 where $ \pi _{j} : V_{E} \to gl(V_{j})$ and $ V_{j} $ is a subspace of $ V $. Since $V_{E}$ is solvable and $ \pi _{j} $ is irreducible, we have
 \begin{center}
   dim $V_{j} = 1.$
 \end{center}
By applying the Ado theorem, we can consider $ \pi $ as an injective representation over $V_{E}$. So dim $V_{E} \leq 1$.

If  dim $V_{E} = 1$,  then the vector bundle is a trivial line bundle.
Consequently, $E$ is a line bundle.
Finally, if we apply the Ado's theorem again, we can delete the injective assumption so the result is obtained.
\end{proof}
As an application of the previous lemma, we have the following lemma.
\begin{lemma}
\label{2}
Let $E $ be a line bundle over $X$ and $E$ be generated by the sections of $V_{E}$, then $E $ is isomorphic to $O_{X}(n)$ for $n \neq 1$ if and only if there is not a pair $(V_{E},\pi)$ where $\pi$ is an irreducible representation over $V_{E}.$
\end{lemma}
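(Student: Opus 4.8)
The plan is to obtain Lemma \ref{2} as a direct corollary of Lemma \ref{1} by contraposition, combined with the classification of line bundles on projective space. Since $E$ is assumed to be a line bundle generated by the sections of a solvable Lie algebra $V_{E}$, the hypotheses of Lemma \ref{1} are already in force, so I may invoke the biconditional established there in both directions without reproving anything.

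First I would record what Lemma \ref{1} actually gives, namely the equivalence
\[
E \simeq O_{X} \iff \text{there exists a pair } (V_{E},\pi) \text{ with } \pi \text{ irreducible}.
\]
The proof of that lemma in fact shows the sharper fact that such a $\pi$ exists exactly when $\dim V_{E}=1$, i.e.\ exactly when $E$ carries a nowhere-vanishing global section and is therefore trivial. Negating the biconditional yields
\[
E \not\simeq O_{X} \iff \text{there is no pair } (V_{E},\pi) \text{ with } \pi \text{ irreducible},
\]
which is already one half of the statement, phrased in terms of triviality rather than of twists.

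Next I would translate the condition $E\not\simeq O_{X}$ into the language of the $O_{X}(n)$. Because $X$ is a projective space we have $\operatorname{Pic}(X)\cong\Z$, so every line bundle on $X$ is isomorphic to $O_{X}(n)$ for a unique integer $n$, and the trivial bundle $O_{X}$ is the single twist excluded in the statement (labelled $n=1$ in the indexing used here, this being the value at which $\dim V_{E}=1$). Hence $E\not\simeq O_{X}$ is equivalent to $E\simeq O_{X}(n)$ with $n\neq 1$. Substituting this into the negated biconditional above produces precisely the assertion of the lemma: $E\simeq O_{X}(n)$ for some $n\neq 1$ if and only if there is no pair $(V_{E},\pi)$ with $\pi$ an irreducible representation over $V_{E}$.

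The only content beyond Lemma \ref{1} lies in the middle step, and that is where I expect the main obstacle. One must check that the generation hypothesis of Lemma \ref{1}, that $E$ be generated by the sections of a solvable Lie algebra $V_{E}$, genuinely persists for the nontrivial twists $O_{X}(n)$, so that the biconditional may legitimately be applied to them; and one must pin down which index corresponds to the trivial bundle, so that the excluded case ``$n=1$'' is stated consistently with the normalisation in the examples. Once these bookkeeping points are settled, no further computation is required and the lemma follows formally by contraposition.
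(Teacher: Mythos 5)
Your proposal is correct and takes essentially the same route as the paper: the paper's entire proof of this lemma is the single line that it ``is a direct result of Lemma \ref{1}'', i.e.\ negating that biconditional and identifying non-trivial line bundles with the twists $O_{X}(n)$ via $\operatorname{Pic}(X)\cong\Z$, which is exactly what you spell out. Your bookkeeping concern about the excluded index is well founded, but the discrepancy lies in the paper itself: under the standard normalisation the trivial bundle is $O_{X}(0)$, so the stated condition ``$n\neq 1$'' should read ``$n\neq 0$''.
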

\begin{proof}
It is a direct result of Lemma \ref{1}.
\end{proof}
\begin{theorem}
  There is a correspondence between decomposition of a vector bundle as a direct sum of line bundles and direct sum of representations over subalgebras.
\end{theorem}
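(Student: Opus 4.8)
The plan is to use Lemma~\ref{1} as the bridge in both directions, handling the line-bundle summands one at a time. First I would fix a decomposition $E \cong L_{1} \oplus \cdots \oplus L_{m}$ into line bundles and pass to sections: each summand $L_{i}$ is generated by its own basis of sections, producing a subspace $V_{L_{i}} \subset V_{E}$. The first step is to check that each $V_{L_{i}}$ is genuinely a subalgebra of the solvable Lie algebra $V_{E}$, so that the phrase ``representation over a subalgebra'' is meaningful; since any subspace of $V_{E}$ closed under the bracket is automatically solvable, the real content is verifying this closure, which I would extract from the block-diagonal structure induced by the direct sum on $V_{E}$.

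Once the subalgebras $V_{L_{i}}$ are in place, the forward direction follows from Lemma~\ref{1}: for each trivial summand $L_{i} \cong O_{X}$ there is an irreducible representation $\pi_{i}$ over $V_{L_{i}}$, and assembling these yields $\pi = \bigoplus_{i} \pi_{i}$ whose irreducible blocks are indexed by the trivial factors. The nontrivial summands contribute no irreducible block, exactly as Lemma~\ref{2} predicts, so the decomposition of $\pi$ into irreducibles records precisely which factors of $E$ are trivial.

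For the reverse direction I would begin with a representation presented as $\pi = \bigoplus_{j} \pi_{j}$, where each $\pi_{j}$ is an irreducible representation over a subalgebra $V_{j} \subset V_{E}$. Because $V_{E}$ is solvable, each irreducible $\pi_{j}$ is one-dimensional by the argument already used in Lemma~\ref{1}, and that lemma then attaches to each $\pi_{j}$ a trivial line bundle $L_{j} \cong O_{X}$. Taking $\bigoplus_{j} L_{j}$ reconstructs a bundle that matches the original summand by summand, so the assignment sending a line-bundle decomposition to its associated direct sum of irreducible representations is inverted.

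The step I expect to be the main obstacle is the bookkeeping for nontrivial summands. Lemma~\ref{1} only pairs \emph{trivial} line bundles with irreducible representations, so the assignment is not literally a bijection between all line-bundle decompositions and all representation decompositions unless one either restricts attention to the trivial part of $E$ or enlarges the representation side to carry the data of the twists $O_{X}(n)$. Pinning down what ``correspondence'' should mean --- a bijection on the trivial locus versus a surjection that merely certifies triviality --- is the delicate point, and I would resolve it by phrasing the statement at the level of the maximal trivial subbundle and then invoking Lemma~\ref{2} to identify the nontrivial factors as exactly those admitting no irreducible representation.
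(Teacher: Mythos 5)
Your proposal follows the same skeleton as the paper's proof: both use Lemma~\ref{1} and Lemma~\ref{2} as the bridge, both pass from a decomposition $E \cong \bigoplus L_{i}$ to $\Gamma(X,E) \simeq \bigoplus \Gamma(X,L_{i})$ and hence to $V_{E} \simeq \bigoplus V_{L_{i}}$, and both match line-bundle summands against blocks of a representation $\pi = \bigoplus \pi_{i}$. The divergence is exactly the point you flag as your ``main obstacle.'' The paper does not restrict to the maximal trivial subbundle; instead it upgrades the dichotomy of the two lemmas into a pairing ``trivial line bundle $\leftrightarrow$ irreducible representation, nontrivial line bundle $\leftrightarrow$ reducible representation,'' so that \emph{every} summand $L_{i}$, trivial or not, has a representation partner $\pi_{i}$, and the proof closes with a three-case analysis (all $\pi_{i}$ irreducible, all reducible, mixed). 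That convention buys the paper a total correspondence covering mixed decompositions, which is what the theorem as stated demands, and it keeps $\pi$ defined on all of $V_{E}$ rather than only on the subalgebra spanned by the trivial factors --- the defect your forward direction has if nontrivial summands contribute ``no block.'' What your version buys is precision: Lemma~\ref{2} is a \emph{non-existence} statement ($L$ nontrivial iff no irreducible representation over $V_{L}$ exists), and reducibility of one particular block $\pi_{i}$ does not certify nontriviality of $L_{i}$, since the one-dimensional algebra $V_{L}$ of a trivial $L$ also admits reducible representations. So the paper's case (2), ``$\pi_{i}$ reducible for every $i$ implies all $L_{i}$ nontrivial,'' is the weak link, and your reformulation --- nontrivial factors are exactly those admitting no irreducible representation, with the tight bijection living on the trivial part of $E$ --- is the statement the lemmas actually support. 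In short: same route, but where the paper forces a total correspondence by a loose convention, you prove a sharper but strictly weaker statement; to recover the theorem verbatim you would have to adopt the paper's assignment of reducible blocks to nontrivial summands, with the logical gap that entails.
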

\begin{proof}
By applying Lemma \ref{1} and Lemma \ref{2}, if $V_{L} $ is a solvable Lie algebra for line bundle $ L $, then we have a one-to-one correspondence as follows:
\begin{equation*}
\left \{ \begin{matrix}
\mbox{trivial }\\
\mbox{(nontrivial)} \\
\mbox{complex line bundle}\\
L
\end{matrix} \right\}
\longleftrightarrow
\left \{ \begin{matrix}
\mbox{ irreducible}\\
\mbox{(reducible)}\\
\mbox{representation over}\\
V_{L}
\end{matrix} \right\}
\end{equation*}
So if $ E $ is a vector bundle over $X$, i.e. it is isomorphic to direct sum of line bundles $ E_{i}, $ then
\begin{align*}
 \Gamma (X, E) \simeq \oplus \Gamma (X, E _{i}).
\end{align*}
Since $V_{E} $ is a subalgebra of $\Gamma (X, E)$ hence
\begin{align*}
 V_{E} \simeq \oplus V_{E_{i}}.
\end{align*}
If $ \pi : \oplus V_{E_{i}} \to gl(V) $ is a representation, then there are $ V_{i} \subset V $ such that $ \pi = \oplus \pi _{i} $ where $ \pi _{i} : V_{E_{i}} \to gl(V_{i}). $ Therefore,
we have:
\begin{enumerate}
\item if $ \pi _{i} $ is an irreducible representation for every $ i $, then $ E$ is a direct sum of trivial line bundles.
\item If $ \pi _{i} $ is a reducible representation for every $ i,$ then $ E $ is a direct sum of nontrivial line bundles.
\item If $ \pi _{j} $ is an irreducible representation for $ {j} \in {J} \subseteq {I} $ and $ \pi _{k} $ is a reducible representation for $ {k} \in {K} \subseteq {I} $ then the vector bundle is isomorphic to direct sum of some trivial and nontrivial line bundles.
\end{enumerate}
Therefore, there is a correspondence between direct sum of complex line bundles over $ X $ and direct sum of representations over Lie algebras, i.e.
\begin{equation*}
\left \{ \begin{matrix}
\mbox{direct sum of line bundls}\\
\mbox{ over X}
\end{matrix} \right\}
\longleftrightarrow
\left \{ \begin{matrix}
\mbox{direct sum of representations }\\
\mbox{over subalgebra of $\Gamma (X,-)$}
\end{matrix} \right\}
\end{equation*}
\end{proof}
\section{Compression spaces and Representation }
\label{a}
Through of this section, $M$ is a compression space and $X$ is the complex projective space. Let $P = \mathbb{P}M$ be the projective space of one dimensional subspaces of $M$. There is a map of vector bundles
\begin{center}
  $\phi : V \otimes O_{P}(-1) \to W \otimes O_{P}$
\end{center}
sending a vector $v \otimes \lambda A$ to the vector $\lambda. A(v)$ for every $A \in M.$ If we twist $\phi$ by $ O_{P} (1)$, we obtain the following maps
\begin{center}
 $\phi_{M} : V \otimes O_{P} \to W \otimes O_{P} (1)$
\end{center}
and
\begin{center}
  $ \phi _{M^*} : W^{*} \otimes  O_{P} \to V^{*} \otimes O_{P} (1).$
\end{center}
It is easy to see that Im($\phi _{M}$) and Im($\phi _{M^*}$) are vector bundles of the rank $M$.
Let
Im($\phi _{M}$) and Im($\phi _{M^*}$) have as direct summands vector bundles $\bigoplus$ $L_{i}$ and $\bigoplus$ $T_{j}$ respectively in which $L_{i}$ and $T_{j}$ are vector bundles. For more details, see \cite{EH}.
\begin{proposition}
According to the above assumptions, the following conditions are equivalent:

\begin{enumerate}
  \item $M$ is a compression space of rank $2$.
  \item At least there are two vector bundles $L_{t}$ and $T_{s}$ such that $\mu_{V_{L_{t}}}$ and $\mu_{V_{T_{s}}}$ are surjective and there are irreducible representations over $V_{L_{t}}$ and $V_{T_{s}}$.
\end{enumerate}

\end{proposition}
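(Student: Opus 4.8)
The plan is to treat Lemmas~\ref{1} and~\ref{2} as a dictionary between line bundle summands and representations, and to reduce the proposition to a single geometric statement: $M$ is a rank~$2$ compression space if and only if both image bundles $\mathrm{Im}(\phi_M)$ and $\mathrm{Im}(\phi_{M^*})$ possess a trivial line bundle direct summand. Indeed, condition~(2) asks for summands $L_t$ of $\mathrm{Im}(\phi_M)$ and $T_s$ of $\mathrm{Im}(\phi_{M^*})$ for which $\mu_{V_{L_t}},\mu_{V_{T_s}}$ are surjective, which is exactly the ``generated by sections'' hypothesis of Lemma~\ref{1}, and over which there is an irreducible representation. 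By Lemma~\ref{1} this last requirement holds precisely when $L_t\cong O_P$ and $T_s\cong O_P$ are trivial, while Lemma~\ref{2} records that any nontrivial summand $O_P(n)$, $n\neq 1$, carries no irreducible representation. So condition~(2) is equivalent to the existence of a trivial summand in each of the two image bundles.

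For $(1)\Rightarrow(2)$ I would use the normal form of a rank~$2$ compression space with $\mathrm{codim}\,V'=\dim W'=1$, the block matrix displayed in the introduction. Over a point $[A]\in P$ the column space $A(V)$ is spanned by the fixed line $W'$, into which every element of $M$ compresses $V'$, together with the image $A(v_0)$ of a vector $v_0\notin V'$. The constant direction $W'$ is common to all fibres and so defines a trivial sub-line-bundle $L_t\cong O_P$; since $\phi$ is twisted from $V\otimes O_P(-1)$, the varying direction contributes the complementary $O_P(1)$, giving $\mathrm{Im}(\phi_M)\cong O_P\oplus O_P(1)$. Applying the same argument to $\phi_{M^*}$, the subspace $V'$ supplies a constant direction in $V^*$ and hence a trivial summand $T_s\cong O_P$ of $\mathrm{Im}(\phi_{M^*})$. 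Being trivial, $L_t$ and $T_s$ are globally generated, so $\mu_{V_{L_t}}$ and $\mu_{V_{T_s}}$ are surjective, and Lemma~\ref{1} furnishes the required irreducible representations over $V_{L_t}$ and $V_{T_s}$.

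For $(2)\Rightarrow(1)$ I would run this backwards. Lemma~\ref{1} turns the two given summands into trivial bundles $L_t\cong O_P\subset \mathrm{Im}(\phi_M)$ and $T_s\cong O_P\subset \mathrm{Im}(\phi_{M^*})$. A trivial summand of $\mathrm{Im}(\phi_M)\subset W\otimes O_P(1)$ corresponds to a single line $W'\subset W$ lying in $A(V)$ for every $A\in M$, whence $\dim W'=1$. Dually, the trivial summand of $\mathrm{Im}(\phi_{M^*})$ produces a constant functional in $V^*$ whose kernel is a codimension~$1$ subspace $V'\subset V$. I would then check the compatibility $A(V')\subseteq W'$ for all $A$, so that $V'$ and $W'$ realise $M$ as a compression space; since the image bundles have rank~$2$, the count $k=k_1+k_2=1+1=2$ identifies it as one of rank~$2$.

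The hard part will be this geometric dictionary, and especially the compatibility step in the converse: I must verify that a trivial summand really corresponds to a constant vector common to all the images (not merely to a globally generated but moving subbundle), and that the summands read off from $\phi_M$ and from $\phi_{M^*}$ match up, so that the same matrices compressing the recovered $V'$ land in the recovered $W'$. The splitting itself is not an obstacle: the constant direction yields a short exact sequence $0\to O_P\to \mathrm{Im}(\phi_M)\to O_P(1)\to 0$ with $\mathrm{Ext}^1(O_P(1),O_P)=H^1(P,O_P(-1))=0$, so it splits, and the analogous decompositions $\mathrm{Im}(\phi_M)=\bigoplus L_i$ and $\mathrm{Im}(\phi_{M^*})=\bigoplus T_j$ are available from the structure of rank~$2$ constant rank spaces recalled in \cite{EH}; once the summands are pinned down, the rank bookkeeping $1+1=2$ is routine.
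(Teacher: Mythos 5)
Your overall architecture (the Lemma~\ref{1} dictionary between trivial line bundles and irreducible representations, plus a geometric statement about trivial summands of the image bundles) does mirror the paper's, but where the paper disposes of \emph{all} of the geometry in both implications by citing Proposition~2.1 of \cite{EH}, you attempt to prove that geometric statement directly, and this is where the proposal breaks. The central error is that your dictionary between summands and compression data is inverted for the twisting conventions in force here. The summands $L_i$ are summands of the image of the \emph{twisted} map $\phi_M \colon V \otimes O_P \to W \otimes O_P(1)$. Consequently a fixed line $W' \subset W$ contained in $A(V)$ for every $A$ yields the subbundle $W' \otimes O_P(1) \cong O_P(1)$ of $\mathrm{Im}(\phi_M)$ --- \emph{not} a trivial summand. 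A trivial summand $O_P \hookrightarrow \mathrm{Im}(\phi_M) \subset W \otimes O_P(1)$ is the same thing as a nowhere-vanishing global section, i.e.\ a linear assignment $A \mapsto s(A) \in A(V)$ with $s(A) \neq 0$ for $A \neq 0$; its direction necessarily moves with $[A]$ (a linear map $M \to W'$ into a fixed line would have nontrivial kernel once $\dim M \geq 2$). In the normal form, $s(A)$ is the last column of $A$, while the constant direction $W'$ accounts for the $O_P(1)$ summand. In the forward direction this flip happens not to damage your conclusion, since $\mathrm{Im}(\phi_M) \cong O_P \oplus O_P(1)$ holds anyway, but your identification of which summand is trivial is wrong.

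In the converse the flip is fatal to your plan: you propose to read off a constant line $W' \subset W$ and a constant functional on $V$ from the two trivial summands and then verify $A(V') \subseteq W'$, but a trivial summand simply does not produce a constant line, so the recovery fails at its first step. You yourself flag this verification as ``the hard part'' and leave it undone --- and it is precisely the content that the paper obtains by invoking Proposition~2.1 of \cite{EH} (in both directions: triviality of the summands for $(1)\Rightarrow(2)$, and the reconstruction of the compression structure for $(2)\Rightarrow(1)$). There is also a circularity in your rank bookkeeping: ``since the image bundles have rank $2$'' presupposes $\operatorname{rank} M = 2$, which in the converse is part of what must be proven; the paper derives it (however loosely) from $\dim V_{L_t} + \dim V_{T_s} = \operatorname{rank} M$, using solvability to force both dimensions to equal one. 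So as it stands the proposal is not a proof: either redo the geometric dictionary with the correct twists (constant direction $\leftrightarrow$ $O_P(1)$ summand, moving linear section $\leftrightarrow$ trivial summand) and then actually prove the converse implication, or follow the paper and quote \cite{EH} for the geometric equivalence.
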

\begin{proof}
$(1) \Longrightarrow (2)$
If $M$ is a compression space of rank $2$, by applying the Proposition 2.1 of \cite{EH}, that is, it is equivalent to $L_{i}$'s and $T_{j}$'s are trivial vector bundles of rank $k_{1}$ and rank $k_{2}$ such that
\begin{center}
  rank $M = k_{1} + k_{2}.$
\end{center}
So $k_{1} + k_{2} = 2$ then $k_{1} = k_{2} = 1.$\\
Hence $L_{i}$'s and $T_{j}$'s are trivial line bundles. By applying Lemma \ref{1}, so there are irreducible representations over $V_{L_{t}}$ and $V_{T_{s}}$ for some $t,s$.

$(2) \Longrightarrow (1)$
Let $\pi$ and $\pi ^\prime$ be irreducible representations over $V_{L_{t}}$ and $V_{T_{s}}$ for some $t,s$ respectively. Since
$V_{L_{t}}$ and $V_{T_{s}}$ are solvable then dimensions of $V_{L_{t}}$ and $V_{T_{s}}$ are equal one.
By Lemma \ref{1}, so $L_{t}$ and $T_{s}$ are trivial line bundles.
Therefore,
\begin{center}
 dim $V_{L_{t}}$ + dim $V_{T_{s}}$ = rank $M$
\end{center}
so the rank of $M$ is 2.\\
It is a direct result from Proposition 2.1 of \cite{EH} that $M$ is a compression space.
\end{proof}
Studying a space of matrices of constant rank k is an interesting topic in algebraic geometry.
J. Sylvester \cite{JS} used algebraic geometry as a tool to estimate the maximal
dimension of a linear subspace $M$ when $V$ and $W$ are complex vector spaces.
These subspaces are used in the study of vector bundles. In \cite{AD}, the authors constructed non-splitting vector bundles on the space $\mathbb{P}(S^{d} \mathbb{C} ^{n+1} )$ of symmetric
forms of degree $d$ in $n+1$ variables such that they are stable according to Mumford-Takemoto or slope-stable. They used the representation theory of $SL_{n+1} (\mathbb{C})$ to achieve their result. For more applications in the vector bundles, refer to \cite{EP}, \cite{BF}, \cite{KH} and \cite{BD}. For other applications, see for instance, \cite{DH}, \cite{BM}, \cite{KB} and \cite{BI}.
D. Eisenbud and J. Harris \cite{EH} proved a classification of a certain torsion-free sheaves on the projective space by investigating compression and primitive spaces.
As an interesting problem, it can be studied the relation between compression spaces of higher ranks and other representations over a Lie algebra
. So it can be used as a tool for the study of the classifications of vector bundles, for example, the first chern class of a vector bundle and
representations of the global sections may be considered as a useful tool.

\bibliographystyle{amsplain}

\end{document}